\documentclass[psamsfonts]{amsart}

%-------Packages---------
\usepackage{amssymb,amsfonts,amscd}
\usepackage[utf8]{inputenc}
\usepackage[colorlinks,linktocpage]{hyperref}
\hypersetup{linkcolor=[rgb]{0,0,0.715}}
\hypersetup{citecolor=[rgb]{0,0.715,0}}

%--------Theorem Environments--------
%theoremstyle{plain} --- default
\newtheorem{theorem}{Theorem}[section]
\newtheorem{corollary}[theorem]{Corollary}

\newtheorem{lemma}[theorem]{Lemma}

\newtheorem*{thm1}{Main Theorem}

\theoremstyle{definition}
\newtheorem{defn}[theorem]{Definition}

\theoremstyle{remark}
\newtheorem{remark}[theorem]{Remark}

\makeatletter
\let\c@equation\c@theorem
\makeatother
\numberwithin{equation}{section}

\bibliographystyle{plain}

\setcounter{tocdepth}{1}

\title[]{$\mathrm{RCD}^{*}(K,N)$ spaces are semi-locally simply connected}

\author[]{Jikang Wang}
\address[Jikang Wang]{Fields Institute for Research in Mathematical Sciences, Toronto, ON, Canada}
\email{jikangwang1117@gmail.com}
\thanks{The author is supported by Fields Institute for Research in Mathematical Sciences.}
\thanks{}

\thanks{}

\DeclareUnicodeCharacter{03C3}{\ensuremath{\sigma}}
\DeclareUnicodeCharacter{2217}{\ensuremath{*}}
\begin{document}
\date{}
\maketitle
\begin{abstract}
It was shown by Mondino-Wei in \cite{MondinoWei2019} that any $\mathrm{RCD}^{*}(K,N)$ space $(X,d,\mathfrak{m})$ has a universal cover. We prove that for any point $x \in X$ and $R>0$, there exists $r<R$ such that any loop in $B_r(x)$ is contractible in $B_R(x)$; in particular, $X$ is semi-locally simply connected and the universal cover of $X$ is simply connected. This generalizes earlier work in \cite{Wang2021} that any Ricci limit space is semi-locally simply connected.  
\end{abstract}
\section{Introduction}
We are interested in local topology of non-smooth length metric spaces. Assume that $(M_i,p_i)$ is a sequence of $n$-dim Riemannian manifolds with $\mathrm{Ric} \ge -(n-1)$. By Gromov's pre-compactness theorem, passing to a subsequence if necessary, $(M_i,p_i)$ converges to $(Y,p)$ in the Gromov-Hausdorff sense. We call all such limit spaces $Y$ Ricci limit spaces. 

The geometric structure of Ricci limit spaces were well studied by Cheeger-Colding-Naber \cite{CheegerColding1997, CheegerColding2000a,CheegerColding2000b,CheegerNaber2013,CheegerNaber2015, ColdingNaber2012}. The first topological result about Ricci limit spaces was shown by Sormani-Wei that any Ricci limit space has a universal cover. Recently the author proved in \cite{Wang2021} that any Ricci limit space is semi-locally simply connected; see also \cite{PanWang2021,PanWei2019} for the cases with volume conditions. Recall that we say a metric space $Y$ is semi-locally simply connected if for any $y \in Y$, there exists $r>0$ such that any loop in $B_r(y)$ is contractible in $Y$.

In this paper we consider  $\mathrm{RCD}^{*}(K,N)$ spaces which generalize Ricci limit spaces, see section 2.1 for further references about $\mathrm{RCD}^{*}(K,N)$ spaces. Mondino-Wei proved that any $\mathrm{RCD}^{*}(K,N)$ space $X$ has a universal cover \cite{MondinoWei2019} while it was unknown whether the universal cover is simply connected. We shall prove that $X$ is semi-locally simply connected; in particular, the universal cover of $X$ is simply connected.     
\begin{thm1}
Assume that a measured metric space $(X,d,\mathfrak{m})$ is an $\mathrm{RCD}^{*}(K,N)$ space for some $K \in \mathbb{R}$ and $N \in (1, \infty)$. Then for any $x \in X$ and $R>0$, there exists $r>0$ so that any loop in $B_r(x)$ is contractible in $B_R(x)$. In particular, $X$ is semi-locally simply connected.
\end{thm1}

\begin{corollary}(cf. \cite{MondinoWei2019})
In the setting of the main theorem, the universal cover of $X$ is simply connected. The revised fundamental group defined in \cite{MondinoWei2019} (deck transformations on the universal cover) is isomorphic to $\pi_1(X)$.
\end{corollary}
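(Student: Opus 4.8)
The plan is to \emph{deduce} the corollary from the Main Theorem together with the existence of the universal cover established by Mondino--Wei, using the classical theory of covering spaces; I will not reprove any contractibility statement. The Main Theorem gives, for each $x \in X$ and any fixed $R>0$, a radius $r>0$ such that every loop in $B_r(x)$ is contractible inside $B_R(x)$, hence \emph{a fortiori} contractible in $X$; this is exactly the assertion that $X$ is semi-locally simply connected. I would first record the two standing hypotheses needed to run covering space theory, namely that $X$ is connected and locally path-connected. Connectedness is automatic since $X$ is geodesic. Local path-connectivity holds because in any length space the set of points joinable to a given point by a path of length $<\epsilon$ is an open, path-connected neighborhood, so arbitrarily small path-connected neighborhoods exist.

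With these three properties in hand (connected, locally path-connected, semi-locally simply connected), I would recall the Spanier description of covers. For an open cover $\mathcal U$ of $X$, let $\pi_1(X,\mathcal U,x_0)\le \pi_1(X,x_0)$ be the subgroup generated by the classes $[\alpha\ast\beta\ast\bar\alpha]$, where $\beta$ is a loop contained in some element of $\mathcal U$ and $\alpha$ joins $x_0$ to the base point of $\beta$; this subgroup is normal, since the set of its generators is closed under conjugation. The universal cover of Sormani--Wei and Mondino--Wei is the categorical cover that covers every connected cover of $X$; when it exists it is unique up to isomorphism and corresponds to $\pi_1^{\mathrm{sp}}(X):=\bigcap_{\mathcal U}\pi_1(X,\mathcal U,x_0)$, which is normal as an intersection of normal subgroups, so the cover is regular and its deck transformation group (the revised fundamental group) equals $\pi_1(X,x_0)/\pi_1^{\mathrm{sp}}(X)$.

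The heart of the deduction is to show $\pi_1^{\mathrm{sp}}(X)=\{e\}$. Choosing for each $x$ the radius $r(x)$ furnished by the Main Theorem, I would take the open cover $\mathcal U=\{B_{r(x)}(x):x\in X\}$. Each generator $[\alpha\ast\beta\ast\bar\alpha]$ of $\pi_1(X,\mathcal U,x_0)$ has $\beta$ a loop in some $B_{r(x)}(x)$, hence $\beta$ is null-homotopic in $X$ by semi-local simple connectivity, so the generator is trivial; therefore $\pi_1(X,\mathcal U,x_0)=\{e\}$ and a fortiori $\pi_1^{\mathrm{sp}}(X)=\{e\}$. Consequently the covering projection induces an injection $\pi_1(\tilde X)\hookrightarrow\pi_1(X)$ with image $\pi_1^{\mathrm{sp}}(X)=\{e\}$, so $\tilde X$ is simply connected, and the revised fundamental group is $\pi_1(X,x_0)/\{e\}\cong\pi_1(X)$, which proves both assertions.

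I expect the only genuinely delicate point to be the identification of Mondino--Wei's abstractly-defined universal cover with the classical simply-connected cover, so that simple connectivity and the deck-group computation may be transported. I would make this explicit by noting that once $X$ is connected, locally path-connected, and semi-locally simply connected, the classical simply-connected cover exists and is itself a cover of every connected cover; by the uniqueness of the categorical universal object it therefore coincides with the universal cover in the sense of Mondino--Wei. Everything else is a routine unwinding of definitions.
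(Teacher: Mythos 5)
Your proof is correct and is precisely the standard covering-space argument that the paper leaves implicit: the corollary is stated there without proof, as an immediate consequence of the Main Theorem (semi-local simple connectivity) combined with Mondino--Wei's existence and uniqueness of the categorical universal cover. Your key steps --- triviality of the Spanier subgroup $\pi_1(X,\mathcal{U},x_0)$ for the cover by the balls $B_{r(x)}(x)$, and the identification of the resulting classical simply connected cover with the Mondino--Wei universal cover via uniqueness of the categorical universal object --- are exactly the intended deduction, so there is nothing to add.
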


We should mention that Santos-Rodríguez and Zamora recently proved many properties of the fundamental group of an $\mathrm{RCD}^{*}(K,N)$ space \cite{SantosZamora}.

\begin{corollary}(cf. \cite{Tuschman1995,Wang2021})
Assume that a sequence of measured metric spaces $(X_i,d_i,\mathfrak{m}_i)$ are $\mathrm{RCD}^{*}(K,N)$ space for some $K \in \mathbb{R}$ and $N \in (1, \infty)$ and Diam$(X_i) \le 1$. Suppose that $(X_i,d_i,\mathfrak{m}_i)$ measured Gromov-Hausdorff converges to $(X,d,\mathfrak{m})$. For $i$ large enough, there is an onto homomorphism $\phi_i : \pi_1(X_i) \to \pi_1(X)$. 
\end{corollary}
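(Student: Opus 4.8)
The plan is to use the uniform semi-local simple connectivity supplied by the Main Theorem, together with compactness of the limit space, to transport loops and homotopies between $X_i$ and $X$ along Gromov-Hausdorff approximations.

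First I would record that since $\mathrm{Diam}(X_i)\le 1$ and $N<\infty$, the generalized Bishop-Gromov inequality makes the spaces uniformly doubling on bounded sets, so $X$ is compact with $\mathrm{Diam}(X)\le 1$. Applying the Main Theorem at each $x\in X$ with a fixed radius, say $R=2$, produces $r_x>0$ such that every loop in $B_{r_x}(x)$ is contractible in $B_R(x)\subseteq X$. The balls $\{B_{r_x/2}(x)\}_{x\in X}$ cover the compact space $X$; extracting a finite subcover and invoking the Lebesgue number lemma yields a single scale $\delta>0$ with the property that \emph{every} loop in $X$ of diameter less than $\delta$ is null-homotopic in $X$. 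This uniform contractibility scale is the engine of the whole argument.

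Next, fix basepoints $p\in X$ and $p_i\in X_i$ with $p_i$ close to $p$, and choose $\varepsilon_i$-Gromov-Hausdorff approximations $f_i\colon X_i\to X$ (with approximate inverses $g_i\colon X\to X_i$) where $\varepsilon_i\to 0$. I would define $\phi_i$ on a loop $\alpha$ in $X_i$ by subdividing $\alpha$ finely into points $q_0,\dots,q_k=q_0$ whose consecutive images $f_i(q_j),f_i(q_{j+1})$ are within $\delta/10$, connecting these images by short paths in $X$, and declaring $\phi_i([\alpha])$ to be the class of the resulting loop $\beta$. The technical heart is then homotopy-invariance: given a homotopy $H\colon[0,1]^2\to X_i$, uniform continuity lets me subdivide the square into a grid so fine that each sub-square has image of small diameter in $X_i$; pushing the grid forward by $f_i$ and filling in short paths produces, for each sub-square, a loop in $X$ of diameter less than $\delta$, which is contractible by the choice of $\delta$. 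Summing these contractions shows homotopic loops map to homotopic loops, and the same grid technique handles independence of the subdivision and the connecting paths. The homomorphism property follows by concatenation. Surjectivity comes from running the construction in reverse: given a loop $\beta$ in $X$, subdivide it, push it into $X_i$ by $g_i$, close it up with short paths to form $\alpha$, and observe that $f_i\circ g_i$ displaces points of $X$ by at most a bounded multiple of $\varepsilon_i$, which is below $\delta$ once $i$ is large, so $\phi_i([\alpha])=[\beta]$.

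I expect the main obstacle to be the simultaneous bookkeeping of all the smallness thresholds: the subdivisions of loops and of the homotopy square must be fine enough, and $i$ large enough that $\varepsilon_i$ is sufficiently small, so that every auxiliary loop created in $X$ stays strictly below the uniform contractibility scale $\delta$. This is precisely where the Main Theorem is indispensable — without a definite scale at which \emph{all} small loops are contractible in $X$, the grid argument would produce loops whose null-homotopy in $X$ could not be guaranteed, and neither well-definedness nor surjectivity of $\phi_i$ could be established.
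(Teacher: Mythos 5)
Your proposal is correct and is exactly the argument the paper intends: the paper gives no proof of this corollary, instead citing Tuschmann and \cite{Wang2021}, whose proof is precisely your combination of a uniform contractibility scale $\delta$ (obtained from the Main Theorem plus compactness and the Lebesgue number lemma) with the standard discretization/grid argument along Gromov--Hausdorff approximations to define $\phi_i$, verify homotopy invariance, and get surjectivity. No gaps; the only bookkeeping you elide (basepoint adjustment via a short path from $f_i(p_i)$ to $p$) is routine.
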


The proof of the main theorem is similar to the proof in \cite{Wang2021} for the Ricci limit space case: we need to show a weak homotopy control property, then use such control to construct a homotopy map. We briefly discuss the proof and point out some differences compared with \cite{Wang2021}.

Since we study local relative fundamental group, it's natural to consider local covers (cover of a ball) and corresponding deck transformations. For a Ricci limit space case $(M_i,y_i) \overset{GH}\longrightarrow (Y,y)$, we can consider universal cover of balls in $M_i$, saying $\widetilde{B_4(y_i)}$. We have equivariant Gromov-Hausdorff convergence. Then we show that there is a slice $S$ in \cite{PanWang2021}, which implies a weak homotopy control property: for any loop in a small and fixed ball of $y$, we can find a nearby loop in $M_i$ which is homotopic to a loop contained in a very small ball with controlled homotopy image. Here "a very small ball" means the radius converges to $0$ as $i \to \infty$. If we observe $M_i$ from the limit space $Y$, a very small ball in $M_i$ has no difference between a point because of the GH-distance gap between $Y$ and $M_i$. Using the weak homotopy control, we can construct a homotopy map on $Y$ and prove that any Ricci limit space is semi-locally simply connected. 

For an $\mathrm{RCD}^{*}(K,N)$ space $(X,d,\mathfrak{m})$, however, it's unknown whether there is a sequence of manifolds converging to $(X,d,\mathfrak{m})$. Therefore, for any $p \in X$, we shall consider relative $\delta$-cover (see section 2.2) of a ball in $X$, saying $\tilde{B}(p,4,40)^{\delta}$.

Using Theorem \ref{stability}, $\tilde{B}(p,4,40)^{\delta}$ are all same for all $\delta$ small enough. In particular, it implies a weak homotopy control property on $X$: for any loop $\gamma$ in a small neighborhood of $p$, $\gamma$ is homotopic to some loops, each of which is contained in a $\delta$-ball, with controlled homotopy image. The reason that we have "some loops" instead of "one loop" is that the fundamental group of $\tilde{B}(p,4,40)^{\delta}$ is not trivial but generated by loops in $\delta$-balls. As we said before, there is no difference between a very small ball and a point during the construction. We may choose $\delta$ arbitrarily small while $r$ is fixed due to Theorem \ref{stability}.  Therefore we can construct a homotopy map and prove that any loop in a small neighborhood of $p$ is contractible with controlled homotopy image.  

The author would thank Xingyu Zhu for helpful discussions about $\mathrm{RCD}^*$ spaces and Jiayin Pan, Jaime Santos-Rodr{\'i}guez, Sergio Zamora-Barrera for some comments to simplify the proof.
\section{Preliminaries}

\subsection{$\mathrm{RCD}^*(K,N)$ spaces}

We will consider a geodesic metric space $(X,d)$ with a $\sigma$-finite Borel positive measure $\mathfrak{m}$; the triple
$(X,d,\mathfrak{m})$ will be called a metric measure space.

$\mathrm{RCD}^*(K,N)$ spaces are the Ricci curvature analog of the celebrated
Alexandrov spaces. $\mathrm{RCD}^*(K,N)$ spaces are the generalization of Riemannian manifolds with the volume measure, Ricci curvature bounded from below by $K$ and dimension bounded above by $N$. 
We do not include the definition of $\mathrm{RCD}^*(K,N)$ spaces since it is technical and will not be used explicitly; see \cite{AGMR2015,AGS2014,EKS2015} for further reference. We recall some basic properties of $\mathrm{RCD}^*(K,N)$ spaces.

\begin{theorem}(Volume comparison, \cite{BacherSturm2010,CavallettiSturm2012})
Let $K \in \mathbb{R}$ and $N \ge 1$. Then there exists a function $\Lambda_{K,N}(\cdot,\cdot): \mathbb{R}^+ \times \mathbb{R}^+ \mapsto \mathbb{R}^+$ such that if $(X,d,\mathfrak{m})$ is an $\mathrm{RCD}^*(K,N)$ space, we have 
$$\frac{\mathfrak{m}(B_r(x)}{\mathfrak{m}(B_R(x))} \ge \Lambda_{K,N}(r,R), \forall 0 < r < R, x \in X.$$
\end{theorem}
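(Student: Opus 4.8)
The plan is to reduce the assertion to the measure–contraction property and then run the classical Bishop--Gromov comparison argument in the metric–measure setting. First I would recall that, by definition, an $\mathrm{RCD}^*(K,N)$ space satisfies the reduced curvature–dimension condition $\mathrm{CD}^*(K,N)$ (see \cite{AGMR2015}), and that $\mathrm{RCD}$ spaces are \emph{essentially non-branching}. By the cited works, an essentially non-branching $\mathrm{CD}^*(K,N)$ space satisfies the measure–contraction property $\mathrm{MCP}(K,N)$ \cite{BacherSturm2010,CavallettiSturm2012}. Thus it suffices to deduce the stated volume-ratio bound from $\mathrm{MCP}(K,N)$.

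Set $\kappa=K/(N-1)$ and let
\[
\mathfrak{s}_\kappa(\theta)=
\begin{cases}
\kappa^{-1/2}\sin\!\big(\sqrt{\kappa}\,\theta\big), & \kappa>0,\\
\theta, & \kappa=0,\\
(-\kappa)^{-1/2}\sinh\!\big(\sqrt{-\kappa}\,\theta\big), & \kappa<0,
\end{cases}
\]
and define the model volume $v_{K,N}(\rho)=\int_0^\rho \mathfrak{s}_\kappa(\theta)^{N-1}\,d\theta$ (for $\kappa>0$ we set $\mathfrak{s}_\kappa\equiv 0$ past its first zero, so that $v_{K,N}$ is constant beyond $\pi/\sqrt\kappa$). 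With this notation the target is the Bishop--Gromov monotonicity statement that $\rho\mapsto \mathfrak{m}(B_\rho(x))/v_{K,N}(\rho)$ is non-increasing; once this is known, for $0<r<R$ one obtains $\mathfrak{m}(B_r(x))/\mathfrak{m}(B_R(x))\ge v_{K,N}(r)/v_{K,N}(R)$, so the theorem holds with $\Lambda_{K,N}(r,R):=v_{K,N}(r)/v_{K,N}(R)$, a strictly positive quantity independent of $X$ and $x$.

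To prove the monotonicity I would fix $x$ and use the geodesic homothety $\Phi_t$ centred at $x$, sending $\mathfrak{m}$-a.e.\ point $y$ to the point at parameter $t$ along the ($\mathfrak{m}$-a.e.\ unique) minimizing geodesic from $x$ to $y$; essential non-branching guarantees this map is well defined $\mathfrak{m}$-a.e.\ and measurable. Applying $\mathrm{MCP}(K,N)$ to the normalized restriction of $\mathfrak{m}$ to a Borel set $A\subset B_R(x)$ yields the lower bound
\[
\mathfrak{m}\big(\Phi_t(A)\big)\ \ge\ \int_A t\left(\frac{\mathfrak{s}_\kappa\big(t\,d(x,y)\big)}{\mathfrak{s}_\kappa\big(d(x,y)\big)}\right)^{N-1} d\mathfrak{m}(y),
\]
since the push-forward of the model-distorted measure $g(y)\,\mathfrak{m}|_A$ is dominated by $\mathfrak{m}$. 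Choosing $A$ to be thin annuli $B_{\rho+\varepsilon}(x)\setminus B_\rho(x)$, comparing the measures of spheres at radius $\rho$ and at radius $t\rho$ through this contraction, and applying the coarea/layer-cake formula with $r=d(x,y)$ produces, as $\varepsilon\to0$, the differential inequality asserting that the (a.e.\ defined) surface measure $s(\rho)=\frac{d}{d\rho}\mathfrak{m}(B_\rho(x))$ satisfies $s(\rho)/\mathfrak{s}_\kappa(\rho)^{N-1}$ non-increasing; integrating in $\rho$ then gives the monotonicity of the ball-volume ratio.

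I expect the main obstacle to be the rigorous execution of this contraction argument in the non-smooth setting, rather than the curvature bookkeeping. Concretely, one must (i) invoke essential non-branching to obtain $\mathfrak{m}$-a.e.\ uniqueness of geodesics emanating from the fixed base point $x$ together with a measurable geodesic selection, (ii) disintegrate the optimal dynamical plan along these geodesics so that the $\mathrm{MCP}$ distortion estimate can be applied fibrewise, and (iii) justify the passage from the integrated inequality for annuli to the pointwise differential inequality for the surface measure, which requires differentiating the monotone function $\rho\mapsto\mathfrak{m}(B_\rho(x))$ almost everywhere and controlling its singular part. For $K>0$ one additionally uses the generalized Bonnet--Myers bound $\mathrm{diam}(X)\le\pi/\sqrt\kappa$ to see that the comparison function caps off correctly and the ratio estimate persists for all $R$. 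These are precisely the points carried out in \cite{BacherSturm2010,CavallettiSturm2012}, whose disintegration machinery I would invoke rather than reprove.
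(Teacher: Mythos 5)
The paper does not prove this statement; it is quoted as background and attributed to \cite{BacherSturm2010,CavallettiSturm2012}. Your outline --- reducing $\mathrm{RCD}^*(K,N)$ to an essentially non-branching $\mathrm{CD}^*(K,N)$ space, passing to $\mathrm{MCP}(K,N)$, and running the Bishop--Gromov contraction argument to get $\Lambda_{K,N}(r,R)=v_{K,N}(r)/v_{K,N}(R)$ --- is exactly the standard route carried out in those references, and since the statement only asks for \emph{some} positive $\Lambda_{K,N}$ (not the sharp constant), deferring the disintegration and a.e.-differentiation technicalities to the cited works is appropriate here.
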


\begin{theorem}(splitting, \cite{Gigli2013})
Let $(X,d,\mathfrak{m})$ be an $\mathrm{RCD}^*(0,N)$ space where $N \ge 1$. Suppose that $X$ contains a line. Then $(X,d,\mathfrak{m})$ is isomorphic to $(X' \times \mathbb{R},d' \times d_E ,\mathfrak{m}' \times \mathcal{L}_1)$ where $d_E$ is the Euclidean distance, $\mathcal{L}_1$ is the Lebesgue measure and $(X',d',\mathfrak{m}')$ is an $\mathrm{RCD}^*(0,N-1)$ space if $N \ge 2$ or a singleton if $N < 2$.
\end{theorem}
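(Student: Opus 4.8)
The plan is to prove this by adapting the classical Cheeger--Gromoll splitting argument to the non-smooth $\mathrm{RCD}^*(0,N)$ setting, following \cite{Gigli2013}. Fix a line $\gamma:\mathbb{R}\to X$ realizing the hypothesis and introduce the two associated Busemann functions
$$b^{\pm}(x)=\lim_{t\to+\infty}\bigl(t-d(x,\gamma(\pm t))\bigr).$$
Since $t\mapsto t-d(x,\gamma(\pm t))$ is nondecreasing and bounded above, the limits exist and $b^{\pm}$ are $1$-Lipschitz. First I would record the two elementary consequences of $\gamma$ being globally minimizing: the triangle inequality gives $b^{+}+b^{-}\le 0$ on all of $X$, with equality along $\gamma$, and the gradient satisfies $|\nabla b^{\pm}|=1$ $\mathfrak{m}$-a.e.

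The analytic core is to upgrade these Lipschitz functions to harmonic ones in the weak, measure-valued sense available on $\mathrm{RCD}^*(0,N)$ spaces. For this I would apply the Laplacian comparison for distance functions valid under a lower Ricci bound: with $K=0$ each $d(\cdot,\gamma(\pm t))$ satisfies $\Delta d\le (N-1)/d$ distributionally, and letting $t\to\infty$ yields the subharmonicity $\Delta b^{\pm}\ge 0$. Then $b^{+}+b^{-}$ is subharmonic, is everywhere $\le 0$, and attains its maximum value $0$ along $\gamma$; the strong maximum principle on $\mathrm{RCD}$ spaces forces $b^{+}+b^{-}\equiv 0$. Hence $b:=b^{+}=-b^{-}$ is both sub- and superharmonic, that is, harmonic, $\Delta b=0$.

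With harmonicity established I would feed $b$ into the self-improved Bochner inequality, the defining analytic feature of $\mathrm{RCD}^*(0,N)$ spaces, which here ($K=0$) reads
$$\Delta\frac{|\nabla b|^{2}}{2}\ \ge\ |\mathrm{Hess}\,b|^{2}+\langle\nabla b,\nabla\Delta b\rangle.$$
Because $|\nabla b|^{2}\equiv 1$ the left-hand side vanishes, and $\Delta b=0$ kills the last term, so we are forced to conclude $\mathrm{Hess}\,b=0$: the unit gradient field $\nabla b$ is parallel. It then remains to integrate this rigidity into a genuine product. One shows that the gradient flow of $b$ is a well-defined one-parameter group of isometries, so that $x\mapsto(P(x),b(x))$ --- with $P$ the projection onto the level set $X':=b^{-1}(0)$ --- is a measure-preserving isometry identifying $(X,d,\mathfrak{m})$ with $(X'\times\mathbb{R},d'\times d_E,\mathfrak{m}'\times\mathcal{L}_1)$. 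A final verification, via tensorization of the curvature-dimension condition, shows the factor $(X',d',\mathfrak{m}')$ is $\mathrm{RCD}^*(0,N-1)$ when $N\ge 2$ and degenerates to a point when $N<2$.

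I expect essentially all of the difficulty to sit in these middle steps, because each must be made rigorous inside the non-smooth calculus that this paper otherwise suppresses: the measure-valued Laplacian and its comparison estimate, the self-improvement of the Bochner inequality that actually produces the Hessian term and hence the rigidity $\mathrm{Hess}\,b=0$, and the construction of the gradient flow together with the proof that it splits off an isometric $\mathbb{R}$-factor. These rest on the Sobolev-space, heat-flow, and second-order differential machinery of $\mathrm{RCD}$ spaces; I would therefore defer the technical execution to \cite{Gigli2013} and use only the structural conclusion in what follows.
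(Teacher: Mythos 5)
The paper offers no proof of this statement---it is quoted purely as a background result with a citation to \cite{Gigli2013}---and your sketch is a faithful outline of Gigli's actual argument (Busemann functions, Laplacian comparison plus the maximum principle to obtain a harmonic $b$ with $|\nabla b|=1$, Bochner rigidity, and the gradient flow of $b$ acting by measure-preserving isometries to produce the splitting, with tensorization handling the dimension drop). Since you explicitly defer the genuinely hard non-smooth calculus to the same reference the paper cites, your proposal matches the paper's treatment in both substance and level of detail.
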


We call a point $x$ regular if $(r_iX, x)$ converges to $\mathbb{R}^k$ for any sequence $r_i \to \infty$, where $k$ is an integer between $1$ and $N$. 

\begin{theorem}(Regular points have full measure, \cite{GMR2015,MondinoNaber2019})
Assume that $(X,d,\mathfrak{m})$ is an $\mathrm{RCD}^*(K,N)$ space where $K \in \mathbb{R}$ and $N \ge 1$. Then $\mathfrak{m}$-a.e. $x \in X$ is a regular point.
\end{theorem}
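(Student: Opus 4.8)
The plan is to follow the structure theory developed by Gigli--Mondino--Rajala and Mondino--Naber. The starting point is that Bishop--Gromov volume comparison (the first recalled theorem) makes $\mathfrak{m}$ locally doubling, so the rescaled pointed spaces $(X, r_i\,d, x, \mathfrak{m}^{x}_{r_i})$, with $\mathfrak{m}^{x}_{r_i}$ a suitably renormalized measure, are measured Gromov--Hausdorff precompact. Hence at every $x$ the set of tangent cones (mGH subsequential limits as $r_i \to \infty$) is nonempty, and by stability of the synthetic curvature-dimension condition under rescaling and mGH limits, each tangent cone is itself an $\mathrm{RCD}^*(0,N)$ space. The goal is then to show that for $\mathfrak{m}$-a.e.\ $x$ every such tangent is isometric to a fixed Euclidean space $\mathbb{R}^k$.

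The main analytic input is the Bochner (Bakry--\'Emery) inequality built into the $\mathrm{RCD}^*(K,N)$ condition. Given this, I would construct, at a.e.\ point and most small scales, $\delta$-splitting maps $u = (u_1,\dots,u_k)\colon B_r(x) \to \mathbb{R}^k$ whose components are harmonic with almost-orthonormal gradients and small averaged Hessian. The integer $k$ is chosen maximal subject to the almost-orthonormality condition; the existence of such maps follows by solving the Dirichlet problem for linear boundary data and combining Bochner with the almost splitting theorem, the quantitative counterpart of Gigli's splitting theorem (the second recalled theorem). A $\delta$-splitting map is an $\varepsilon$-Gromov--Hausdorff approximation onto its image, so it forces the tangent cone to split off an $\mathbb{R}^k$ factor.

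Next I would upgrade this to rectifiability and to the exact identification of the tangent. Using the gradient and Hessian estimates, one shows that on a set of density one the $\delta$-splitting map is bi-Lipschitz onto an open subset of $\mathbb{R}^k$; this exhibits $X$, up to $\mathfrak{m}$-measure zero, as a countable union of $k$-rectifiable pieces. On such a piece the tangent cone splits as $\mathbb{R}^k \times C(Z)$, and the maximality of $k$ together with metric-measure-cone rigidity (a nontrivial cross-section $Z$ would either produce an extra almost-splitting direction or violate the density comparison) forces $Z$ to be a point for a.e.\ $x$. Finally, at a density-one point of the rectifiable structure the blow-up is \emph{independent} of the sequence $r_i$, giving the uniqueness of the Euclidean tangent cone demanded by the definition of a regular point.

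The hardest step is the construction and bi-Lipschitz control of the $\delta$-splitting maps: this is where the entire first-order calculus on $\mathrm{RCD}^*$ spaces (Sobolev spaces, the measure-valued Laplacian, heat-kernel and gradient estimates) and the almost splitting theorem must be invoked, and it is the technical heart of the cited references. The uniqueness of the tangent cone (the ``for any sequence $r_i$'' clause) is the most delicate consequence, since a priori different scales could produce different blow-ups; it is precisely the rectifiable structure, with its a.e.\ approximate differentiability, that rules this out.
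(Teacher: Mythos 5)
This statement is quoted background: the paper offers no proof of it, citing \cite{GMR2015,MondinoNaber2019}, so there is nothing internal to compare your argument against. Judged against the actual proofs in those references, your outline is essentially faithful: precompactness of rescalings via Bishop--Gromov, stability of the $\mathrm{RCD}^*$ condition under blow-up, harmonic $\delta$-splitting maps controlled by the Bochner inequality and the almost splitting theorem, the resulting rectifiable atlas of bi-Lipschitz charts, and the derivation of a.e.\ \emph{uniqueness} of the Euclidean tangent (the ``for any sequence $r_i$'' clause, which is what Mondino--Naber's rectifiability adds beyond Gigli--Mondino--Rajala's existence of one Euclidean tangent a.e.). One inaccuracy worth flagging: in the collapsed $\mathrm{RCD}^*$ setting tangent ``cones'' are not known to be metric cones, so the step ``the tangent splits as $\mathbb{R}^k\times C(Z)$ and cone rigidity forces $Z$ to be a point'' is not how the cited proofs run; what one actually knows is a splitting $\mathbb{R}^k\times Y$ with $Y$ an $\mathrm{RCD}^*(0,N-k)$ space, and $Y$ is shown to be a point via the iterated-tangent (Preiss-type) argument together with the splitting theorem and the maximality of $k$. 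With that substitution your sketch is a correct, if necessarily high-level, account of a deep theorem whose technical core (the construction and bi-Lipschitz control of the splitting maps) you rightly identify but do not, and could not reasonably be expected to, reprove here.
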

\subsection{$\delta$-cover}

Let's recall the $\delta$-cover introduced in \cite{SormaniWei2004}; see also \cite{MondinoWei2019}.

Given an open covering $\mathcal{U}$ of a length metric space $X$, there is a covering space $\tilde{X}_{\mathcal{U}}$ such that $\pi_1(\tilde{X}_{\mathcal{U}}, \tilde{p})$ is isomorphic to $\pi_1(X,\mathcal{U},p)$, where $p=\pi(\tilde{p})$ and $\pi_1(X,\mathcal{U},p)$ is the normal subgroup generated by all $[\alpha^{-1} \circ \beta \circ \alpha] \in \pi_1(X,p)$; $\beta$ is a loop lying in an element of $\mathcal{U}$ and  $\alpha$ is a path from $p$ to $\beta(0)$.
\begin{defn} (relative $\delta$-cover)
Let $(X,d)$ be a length metric space. The $\delta$-cover of $X$, denoted by $\tilde{X}^{\delta}$, is defined to be $\tilde{X}_{\mathcal{U}_{\delta}}$, where $\mathcal{U}_{\delta}$ is an open covering consisting of all $\delta$ balls in $X$. \\
For any $0 <r < R$ and $x \in X$, let $\tilde{B}(x,R)^{\delta}$ be the $\delta$ cover of $B_R(x)$. A connected component of $\pi^{-1}(B_r(x)) \subset \tilde{B}(x,R)^{\delta}$ is called a relative $\delta$-cover, denoted by $\tilde{B}(x,r,R)^{\delta}$.
\end{defn}

Mondino and Wei proved the stability of relative $\delta$-cover (Theorem 4.5 in \cite{MondinoWei2019}). We slightly change the form for our purpose.
\begin{theorem}\label{stability}(\cite{MondinoWei2019})
Let $(X,d,\mathfrak{m})$ be an $\mathrm{RCD}^{*}(K,N)$ space for some $K \in \mathbb{R}$ and $N \in (1, \infty)$ and $x \in X$. For any $R >0$, there exists $\delta_0$ so that $\tilde{B}(x,R/10,R)^{\delta}$ are all same for any $\delta \le \delta_0$.
\end{theorem}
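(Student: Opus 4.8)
The plan is to read the statement as a mild restatement of the stability theorem of Mondino--Wei (Theorem 4.5 in \cite{MondinoWei2019}) and to organize the argument around the \emph{monotonicity} of $\delta$-covers together with a \emph{uniform finiteness} coming from volume comparison. First I would record the monotonicity. For $0<\delta'\le\delta$ the covering $\mathcal{U}_{\delta'}$ refines $\mathcal{U}_{\delta}$, so every loop contained in a $\delta'$-ball is contained in a $\delta$-ball; hence $\pi_1(B_R(x),\mathcal{U}_{\delta'},x)\subseteq\pi_1(B_R(x),\mathcal{U}_{\delta},x)$, and there is a covering map $\tilde{B}(x,R)^{\delta'}\to\tilde{B}(x,R)^{\delta}$ commuting with the projections to $B_R(x)$. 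Restricting to the component of $\pi^{-1}(B_{R/10}(x))$ containing the basepoint yields covering maps $\tilde{B}(x,R/10,R)^{\delta'}\to\tilde{B}(x,R/10,R)^{\delta}$, so the relative covers form an inverse tower as $\delta\downarrow 0$. It then suffices to show that the number of sheets $n(\delta)$ of $\tilde{B}(x,R/10,R)^{\delta}$ over $B_{R/10}(x)$ stabilizes: $n(\cdot)$ is nondecreasing in $1/\delta$, and once two consecutive covers in the tower have the same finite number of sheets the connecting covering map is $1$-sheeted, hence a homeomorphism, so all finer covers coincide.

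The crux is therefore to bound $n(\delta)\le C(K,N,R)<\infty$ uniformly in $\delta$; a monotone integer-valued sequence with a uniform finite bound is eventually constant, which produces the required $\delta_0$. This is where the hypotheses enter. The $\delta$-cover $\tilde{B}(x,R)^{\delta}$ is locally isometric to $B_R(x)$, and since the synthetic lower Ricci bound is local and lifts through local isometries, $\tilde{B}(x,R)^{\delta}$ is again an $\mathrm{RCD}^{*}(K,N)$ space; in particular the volume comparison theorem quoted above holds on it with the same constants $\Lambda_{K,N}$. One then expects to control the relevant part of the deck group through Gromov's short-generator argument --- the group is generated by deck transformations whose displacement of the basepoint is comparable to the diameter $2R$ --- combined with the Bishop--Gromov-type packing furnished by volume comparison, and thereby to bound $n(\delta)$ in terms of $K,N,R$ alone.

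The main obstacle is that this packing estimate, applied naively, degenerates as $\delta\to 0$: distinct lifts of a point in a $\delta$-cover are separated only by roughly $2\delta$, so a direct ball-packing bound on the generators blows up, and one must instead rule out topology accumulating over infinitely many scales. I would handle this by a compactness/contradiction argument: if $n(\delta_i)\to\infty$ along some $\delta_i\downarrow 0$, pass to an equivariant pointed measured Gromov--Hausdorff limit of the covers $\tilde{B}(x,R)^{\delta_i}$ --- precompactness and closedness of the $\mathrm{RCD}^{*}(K,N)$ class guarantee the limit is again $\mathrm{RCD}^{*}(K,N)$ --- and extract a contradiction with volume comparison on the limit, the factor-$10$ gap between the radii $R/10$ and $R$ supplying the definite room needed to manufacture incompatible short loops. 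Making this precise is exactly the content of Mondino--Wei's Theorem 4.5, so in practice I would invoke that theorem directly and only verify that the passage to the radii $R/10$ and $R$ is a matter of rescaling and relabeling.
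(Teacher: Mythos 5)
The paper does not prove this statement; it is quoted from Mondino--Wei (Theorem 4.5 of \cite{MondinoWei2019}), and the only indication given of the argument is that one first treats regular points via the Halfway Lemma and the Abresch--Gromoll inequality, and then uses density of regular points together with volume comparison. Your final move --- invoking Mondino--Wei's theorem directly and noting that the radii $R/10$ and $R$ are a matter of relabeling --- therefore matches what the paper actually does. Your opening observation (monotonicity of the $\delta$-covers, so that the relative covers form a tower and stabilization is equivalent to the connecting covering maps eventually being trivial) is also correct and standard.

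However, the reduction you place at the center of the argument is genuinely wrong. You claim the crux is a uniform bound $n(\delta)\le C(K,N,R)<\infty$ on the number of sheets of $\tilde{B}(x,R/10,R)^{\delta}\to B_{R/10}(x)$. No such bound exists: the relative $\delta$-cover need not be finite-sheeted at all. For a flat torus (or a flat cylinder cross-section) with $R$ larger than the diameter, $B_R(x)$ is the whole space, its $\delta$-cover for small $\delta$ is $\mathbb{R}^2$ (resp.\ $\mathbb{R}\times S^1$ unwrapped), and the connected preimage of $B_{R/10}(x)$ has infinitely many sheets --- yet the covers do stabilize. Moreover, for infinite-sheeted covers, equality of fiber cardinalities along the tower does not force the connecting map to be one-sheeted, so the ``monotone integer sequence with a finite bound is eventually constant'' step collapses. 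What must actually be shown is that no \emph{new} deck transformations (equivalently, no new essential loops in $\delta$-balls) appear below some definite scale $\delta_0$. That is exactly what the Halfway Lemma accomplishes (any new element of the deck group is represented by a loop of length comparable to $\delta$, based near any prescribed point) combined with the Uniform Cut Lemma built on the Abresch--Gromoll excess estimate (such a short essential loop at a regular point would obstruct the tangent cones there from being Euclidean); volume comparison and density of regular points then propagate the conclusion from regular points to all of $B_{R/10}(x)$. Your fallback compactness-and-contradiction sketch gestures in a workable direction but never identifies the actual contradiction, so as written the proposal does not constitute a proof independent of the citation, and its one concrete reduction is false.
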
  

The idea of the proof of Theorem \ref{stability} is to first prove the case of regular points using Halfway Lemma and  Abresch-Gromoll inequality, then use the fact that regular points are dense and volume comparison theorem.
\section{Proof of the Main Theorem}

Recall in \cite{Wang2021}, a key lemma says that for any loop $\gamma$ in a small neighborhood of a Ricci limit space $Y$, we can find a loop $\gamma_i$ in $M_i$ so that $\gamma_i$ is point-wise close to $\gamma$ and homotpoic to a very short loop by the controlled homotopy image. 

A similar (and easier) idea works for an $\mathrm{RCD}^*(K,N)$ space $X$. Using Theorem \ref{stability}, we can show that any loop $\gamma$, in a small neighborhood of an $\mathrm{RCD}^*(K,N)$ space, is homotopic to some loops in very small balls by a controlled homotopy image. Then we can use same construction to find a homotopy map.

\begin{lemma}\label{lemma3}
Fix $x \in \bar{B}_{1/2}(p)$ in an $\mathrm{RCD}^*(K,N)$ space $(X,p)$. For any $l<1/2$ and small $\delta>0$, there exists $r<l$ and $k \in \mathbb{N}$ so that any loop $\gamma \subset B_r(x)$ is homotopic to the union of some loops $\gamma_j$ ($1 \le j \le k$) in $\delta$-balls and the homotopy image is in $B_{40l}(x)$. 
\\ 
To be precise, in the unit disc $D \subset \mathbb{R}^2$, we can find $k$ disjoint discs $D_0,D_2,...,D_k$ with radius less than $1/10$ and away from the boundary of $D$. Then there is a continuous map $H$ from the completion of $D-\sum_{j=1}^k D_j$ to $B_{40l}(x)$ so that $H(\partial D)=\gamma$ and $H(\partial D_j)=\gamma_j$ where each $\gamma_j$ is contained in a $\delta$-ball.
\end{lemma}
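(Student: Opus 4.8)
The plan is to deduce the whole statement from one application of Theorem~\ref{stability} together with elementary covering-space theory, and then to package the resulting algebraic factorisation of $[\gamma]$ as the required map $H$.

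First I would fix the scale. Applying Theorem~\ref{stability} with $R=40l$ yields $\delta_0>0$ such that the relative covers $\tilde{B}(x,4l,40l)^{\delta'}$ all coincide, as covering spaces of $B_{4l}(x)$, for every $\delta'\le\delta_0$; after shrinking $\delta_0$ I may assume $\delta_0\le l$, and ``small $\delta$'' in the hypothesis means $\delta\le\delta_0$. I then set $r:=\delta_0/2$, which is \emph{independent of} $\delta$ and satisfies $r<l$ and $B_r(x)\subset B_{\delta_0}(x)\subset B_{4l}(x)$. The point, as emphasised in the introduction, is that $r$ stays fixed while $\delta$ may be taken arbitrarily small, so that a loop in $B_r(x)$ is genuinely large compared with a $\delta$-ball and its decomposition into $\delta$-ball loops is nontrivial.

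Next, given a loop $\gamma\subset B_r(x)$, I would base it at $x$ by a path inside $B_r(x)$ and view $[\gamma]\in\pi_1(B_{40l}(x),x)$. Since $\gamma$ lies in the single $\delta_0$-ball $B_{\delta_0}(x)$, its class lies in the normal subgroup $N_{\delta_0}\trianglelefteq\pi_1(B_{40l}(x),x)$ generated by $\delta_0$-ball loops; equivalently $\gamma$ lifts to a loop in $\tilde{B}(x,4l,40l)^{\delta_0}$. By the stability just obtained this equals $Y:=\tilde{B}(x,4l,40l)^{\delta}$, so $\gamma$ lifts to a loop $\tilde\gamma$ in $Y$ as well. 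Because $\pi_1(Y)$ is generated by loops contained in $\delta$-balls, I can write $[\tilde\gamma]=\prod_{j=1}^{k}\tilde\alpha_j^{-1}\eta_j\tilde\alpha_j$ with each $\eta_j$ a loop in a $\delta$-ball of $Y$ and $\tilde\alpha_j$ a path in $Y$; here $k=k(\gamma)$ is finite (the word length of $[\tilde\gamma]$, not claimed uniform in $\gamma$). Pushing forward by the covering projection $q\colon Y\to B_{4l}(x)$, and using that $\delta$-balls are evenly covered, gives loops $\gamma_j:=q\circ\eta_j$ inside $\delta$-balls together with the identity $[\gamma]=\prod_{j=1}^{k}[\alpha_j^{-1}\gamma_j\alpha_j]$ in $\pi_1(B_{4l}(x),x)\subset\pi_1(B_{40l}(x),x)$, where $\alpha_j=q\circ\tilde\alpha_j$.

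Finally I would realise this factorisation geometrically. In the unit disc $D$ I place $k$ disjoint sub-discs $D_1,\dots,D_k$ of radius $<1/10$, away from $\partial D$ and joined to a fixed boundary point by disjoint corridors; the completion of $D-\bigsqcup_j D_j$ retracts to a wedge of $k$ circles, along which $\partial D$ is freely homotopic to $\prod_j(\text{corridor}_j)(\partial D_j)(\text{corridor}_j)^{-1}$. Sending $\partial D_j$ to $\gamma_j$ and the $j$-th corridor to $\alpha_j$, the based homotopy realising the displayed identity extends to a continuous $H$ on this holed disc with $H(\partial D)=\gamma$ and $H(\partial D_j)=\gamma_j$; as all data were pushed forward from $Y$, the image of $H$ lies in $B_{4l}(x)\subset B_{40l}(x)$. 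The main difficulty I anticipate is the covering-space bookkeeping of the third paragraph: identifying ``lifts to a loop in the relative cover'' with membership in $N_{\delta'}$, using that Theorem~\ref{stability} provides an \emph{isomorphism} of relative covers so that the set of loops lifting to loops is truly preserved as one passes from scale $\delta_0$ to scale $\delta$, and verifying that $\delta$-balls are evenly covered so the pushed-forward inner loops genuinely sit in $\delta$-balls. Fixing the scales and drawing the holed disc are routine by comparison.
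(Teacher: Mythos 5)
Your overall architecture is the same as the paper's: lift $\gamma$ to the stable relative $\delta$-cover, decompose the lift as a product of conjugates of $\delta$-ball loops, project down, and realize the factorisation on a holed disc. Your justification that $\gamma$ lifts to a loop (it sits in the single $\delta_0$-ball $B_{\delta_0}(x)$, hence $[\gamma]\in N_{\delta_0}$, and stability transfers this to every $\delta\le\delta_0$) is a clean variant of the paper's choice of $r$ small enough that $B_r(x)$ is evenly covered in the stable cover; both capture the essential point that $r$ is fixed while $\delta$ shrinks.

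There is, however, one step that fails as written: the claim that $\pi_1(Y)$, for $Y=\tilde{B}(x,4l,40l)^{\delta}$ the \emph{relative} cover, is generated by conjugates $\tilde\alpha_j^{-1}\eta_j\tilde\alpha_j$ with the loops $\eta_j$ in $\delta$-balls and the paths $\tilde\alpha_j$ inside $Y$ — and the consequent assertion that the homotopy image lies in $B_{4l}(x)$. The covering $Y\to B_{4l}(x)$ corresponds to the subgroup of $\pi_1(B_{4l}(x),x)$ consisting of classes whose image in $\pi_1(B_{40l}(x),x)$ is a product of conjugates of $\delta$-ball loops, where the conjugating paths and the homotopies may leave $B_{4l}(x)$. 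Concretely, if $B_{4l}(x)$ were an annulus with a long core circle that bounds a disc inside $B_{40l}(x)$, then $\pi_1(B_{40l}(x))$ is trivial, $Y=B_{4l}(x)$, and the core circle lifts to a loop in $Y$ that is not homotopic \emph{within} $Y$ to any product of $\delta$-ball loops; it only becomes so after allowing the homotopy to run through $B_{40l}(x)$. The fix is exactly what the paper does: perform the decomposition and the homotopy $\tilde H$ in the full $\delta$-cover $\tilde{B}(x,40l)^{\delta}$ (which contains $Y$), whose fundamental group is by definition the subgroup generated by conjugates of $\delta$-ball loops. The projected image is then only controlled within $B_{40l}(x)$ — which is precisely why the lemma's conclusion is $B_{40l}(x)$ rather than $B_{4l}(x)$. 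With that correction your argument goes through; the holed-disc realisation at the end is the same gluing the paper performs on the homotopy square.
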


\begin{proof}
We may assume $\delta$ is small enough to apply Theorem \ref{stability} with $\tilde{B}(x,4l,40l)^{\delta}$. Let $\tilde{x}$ be a pre-image of $x$ in the stable relative $\delta$-cover. We may assume $r$ small enough so that $B_r(x)$ is isometric to $B_r(\tilde{x})$. Given a loop $\gamma$ in $B_r(x)$, we can lift $\gamma$ to a loop $\tilde{\gamma}$ in $B_r(\tilde{x})$.

Since $\tilde{\gamma}$ is a loop in the $\delta$-cover of $B_{40l}(x)$, it's homotopic, by $\tilde{H}$, to 
$$\tilde{c}_1\tilde{\gamma}_1\tilde{c}_1^{-1}...\tilde{c}_k\tilde{\gamma}_{k}\tilde{c}_k^{-1}$$ 
where $\tilde{c}_j$ is a path from $\tilde{\gamma}(0)$ to a point $\tilde{x}_j \in \tilde{B}(x,40l)^{\delta}$ and $\tilde{\gamma}_j$ is a loop contained in $B_{\delta}(\tilde{x}_j)$, $0 \le j \le k$. 

Let 
$$c_j=\pi(\tilde{c}_j),\gamma_j=\pi(\tilde{\gamma}_j), 0 \le j \le k, $$
and
$$\gamma^{\delta} =c_1\gamma_1c_1^{-1}c_1\gamma_1c_1^{-1}...c_k\gamma_{k}c_k^{-1}.$$
where $\pi: \tilde{B}(x,40l)^{\delta} \to B_{40l}(x)$ is the projection map. So $H=\pi(\tilde{H})$ is a homotopy map between $\gamma$ and $\gamma^{\delta}$. Recall that here $H$ is a continuous map from $[0,1] \times [0,1]$ to  $X$ so that $H(0,t)=H(1,t)=p$, $H(t,0) = \gamma(t)$, $H(t,1) = \gamma^{\delta}(t)$. We shall modify the definition area of $H$ by gluing. We first glue $(0,t)$ and $(1,t)$ for each $t$, and get $H$ mapping from an annulus to $B_{40l}(x)$ so that the image of outer boundary is $\gamma$ and the image of inner boundary is $\gamma^\delta$. Then we glue the parts on the inner boundary corresponding to $c_j$ and $c_j^{-1}$ for each $j$. Now the definition area of $H$ is (up to a homeomorphism) the completion of $D-\sum_{j=1}^k D_j$ to $B_{40l}(x)$, where $D_j$ is a disc contained in $D$; $H(\partial D)=\gamma$ and $H(\partial D_j)=\gamma_j$.
\end{proof}

The proof of the Main Theorem is an inductive argument using Lemma \ref{lemma3}.
\begin{proof}[Proof of the main theorem]
Fix $R>0$, choose $l_i= R/100^i$ and take $l=l_i$ in Lemma \ref{lemma3}, $i=1,2,...$. Although the choice of $r$ in Lemma \ref{lemma3} depends on $x$, since $X$ is locally compact, we can find $r_i$ working for all $x \in \bar{B}_{1/2}(p)$ and $l=l_i$ in Lemma \ref{lemma3}. Choose $\delta_i < r_{i+1}$. For all $x \in \bar{B}_{1/2}(p)$, any loop in $B_{r_i}(x)$ is homotopic to loops in $\delta_i$-balls and the homotopy image is contained in $B_{l_i}(x)$. We shall show any loop $\gamma$ in $B_{r_1}(p)$ is contractible in $B_{R}(p)$.

Roughly speaking, we first shrink $\gamma$ to loops in $\delta_1$-balls, the second step is to shrink each new loop to smaller loops in $\delta_2$-balls, etc. Since the homotopy to shrink each loop is contained in a $l_i$-ball in the $i$-th step, this process converges to a homotopy map which contracts $\gamma$ while the image is contained in a ball with radius $\sum_{i=1}^{\infty} l_i < R$. We show a detailed proof in the following. 

Since $\gamma$ is in $B_{r_1}(p)$, we can apply Lemma \ref{lemma3} with $\delta=\delta_1$ and $l=l_1$. There are $k_1$ loops $\gamma_j^1 \subset B_{\delta_1}(z_j^1)$ where $z_j^1 \in X$, $k_1$ disjoint balls $D_j^1 \subset D$ with radius less than $1/10$, and a continuous map $H_1$ from the completion $D-\sum_{j=1}^{k_1} D_j^1$ to $B_{l_1}(p)$, $H_1(\partial D)=\gamma$, $H_1(\partial D_j^1)=\gamma_j^1$, $1 \le j \le k_1$.

If $k_1 =0$, $\gamma$ is homotopic to a point by $H_1$. We may assume $k_1 >0$. We shall shrink $\gamma_j^1$ in the second step. First consider $\gamma_1^1$ for example. Note that $\delta_1 < r_2$, we can apply lemma \ref{lemma3} to $\gamma_1^1$ with $l=l_2$ and $\delta=\delta_2$. There are $k_{2,1}$ (here the subscript $(2,1)$ means the first loop in the second step) loops $\gamma_j^{2,1} \subset B_{\delta_1}(z_j^{2,1})$, where $z_j^{2,1} \in X$ and $j \le k_{2,1}$. There are $k_{2,1}$ disjoint balls $D_j^{2,1} \subset D_1^1$ with radius less than $1/100$ and we can find a continuous map $H_{2,1}$ from the completion $D_1^1-\sum_{j=1}^{k_{2,1}} D_j^{2,1}$ to $B_{l_2}(z_1^1)$, so that $H_{2,1}(\partial D_1^1)=\gamma_1^1$ and $H_{2,1}(\partial D_j^{2,1})=\gamma_j^1$, $1 \le j \le k_{2,1}$. Note that $H_1(\partial D_1^1)=H_{2,1}(\partial D_1^1)=\gamma_1^1$, we can extend $H_1$ using $H_{2,1}$. 

Repeat the above process for each $1 \le j \le k_1$ and extend $H_1$ to a new continuous map $H_2$. That is, we can find $k_2$ loops $\gamma_j^2 \subset B_{\delta_2}(z_j^2)$ where $z_j^2 \in X$, $k_2$ disjoint balls $D_j^2 \subset D$ with radius less than $1/100$, each $D_j^2$ is contained in one of $D_{j'}^1$, $1 \le j' \le k_1$, and $H_2$ from the completion $D-\sum_{j=1}^{k_2} D_j^1$ to $B_{l_1+l_2}(p)$, $H_2(\partial D)=\gamma$, $H_2(\partial D_j^2)=\gamma_j^2$, $1 \le j \le k_2$. Moreover $H_2$ coincides with $H_1$ wherever $H_1$ is defined.

By induction, in the $i$-th step, we can find $k_i$ loops $\gamma_j^i \subset B_{\delta_i}(z_j^i)$ where $z_j^i \in X$, $k_i$ disjoint balls $D_j^i \subset D$ with radius less than $1/10^i$, each $D_j^i$ is contained in one of $D_{j'}^{i-1}$, $1 \le j' \le k_{i-1}$, and $H_i$ from the completion $D-\sum_{j=1}^{k_i} D_j^i$ to $X$, $H_i(\partial D)=\gamma$, $H_i(\partial D_j^i)=\gamma_j^i$. Moreover $H_i=H_{i-1}$ where $H_{i-1}$ is defined. 

We prove that $H_i$ converges to a continuous map $H$ from $D$ to $B_R(x)$ with $H(\partial D)=\gamma$; thus $\gamma$ is contractible in $B_R(x)$. For any $q \in D$, if $H_i(q)$ is defined for some $i$ (thus for all large $i$), define $H(q)=H_i(q)$. Then we have $H(\partial D)=\gamma$. In this case, $H$ is continuous at $q$ because  we may assume $q$ is not on $\partial D_j^i$ by taking large $i$ (in lemma \ref{lemma3}, smaller discs $D_j$ are away from $\partial D$). Otherwise we assume $H_i$ is not defined at $q$ for all $i$, then for each $i$ there is a disc $D_{j_i}^i$ so that $q \in D_{j_i}^i$. Moreover, $D_{j_i}^i \subset D_{j_{i-1}}^{i-1}$ for each $i$. Let $S_i \subset D_{j_i}^i$ be the set where $H_i$ is defined.  Recall that the image of $H_i(S_i)$ is contained in a $l_i$-ball, thus  $H_i(S_i)$ converges to a point as $i \to \infty$; define $H(q)$ to be this point. The image  $H(D_{j_i}^i)$ is contained in a ball with radius $\sum_{j=i}^{\infty} l_j$ which converges to $0$ as $i \to \infty$, thus $H$ is continuous at $q$. Take $i=1$, $H(D)$ is contained in a ball with radius $\sum_{j=1}^{\infty} l_j < R$.
\end{proof}

\begin{remark}
We actually proved that for a locally compact metric space, if any local relative $\delta$-cover is stable, then  it is semi-locally simply connected.  

We should also mention that harmonic archipelago is an example that there is a loop which can be homotopic to a loop in an arbitrarily small ball, but is not contractible \cite{Fabel}.
\end{remark}
\bibliographystyle{plain} 
\bibliography{semii}
\end{document}